\pdfoutput=1
\documentclass[conference]{IEEEtran}
\IEEEoverridecommandlockouts

\usepackage[T1]{fontenc}
\usepackage[utf8]{inputenc}
\usepackage[english]{babel}
\usepackage{cite}
\usepackage{amsmath,amssymb,amsfonts}
\usepackage{stmaryrd}
\usepackage{array}
\usepackage{booktabs}
\usepackage{tikz}
\usepackage{float}
\usetikzlibrary{decorations.pathreplacing, calc}
\usepackage{hyperref}

\newtheorem{theorem}{Theorem}
\newtheorem{lemma}{Lemma}
\newtheorem{remark}{Remark}
\newtheorem{definition}{Definition}

\begin{document}

\title{Shifted S-templates and improved lower bounds\\for Schur numbers}

\author{
\IEEEauthorblockN{Nils Bengone$^{a,1}$, Amine Brouk$^{a,1}$, Max Grinsztajn$^{a,1}$, Térence Helbert$^{a,1}$, Bao Lugherini$^{a,1}$\\
Arpad Rimmel$^{b,2}$, Joanna Tomasik$^{b,2}$%
\thanks{$^1$These authors contributed equally.}%
\thanks{$^2$These authors supervised the work of the main authors.}%
}
\IEEEauthorblockA{$^{a}$CentraleSupélec, Université Paris-Saclay, 91190, Gif-sur-Yvette, France\\
$^{b}$LISN, CentraleSupélec, Université Paris-Saclay, 91405, Orsay, France}
}
\maketitle

\begin{abstract}
We present an extension of the template-based approach for Schur numbers developed by Rowley \cite{rowley-s7}. This new form of template construction, which we call \textit{shifted S-templates}, was discovered during a conversation with ChatGPT 5.5 Pro, then refined, verified, and extended to multiple shifted S-templates. These new templates generalize the first ones by giving more flexibility in the coloring. Using this added flexibility with the new way to color the \textit{special label cells} of the template, we exhibit a template which yields the recurrence $S(k+2)\geq 10S(k)+2$, improving on the classical Abbott-Hanson recurrence \cite{abbot-hanson} $S(k+2)\geq 9S(k)+4$ for the same step. Combined with the known bounds $S(6)\geq 536$ and $S(11)\geq 203~828$, this implies

$$S(8)\geq 5~362,\qquad 
S(13)\geq 2~038~282,$$

\noindent improving the previously listed lower bounds $5~286$ and $2~011~290$.
\end{abstract}

\section{Introduction}

We denote by $S(k)$ the greatest $p$ such that we are able to partition $\llbracket 1,p \rrbracket$ into $k$ sum-free subsets. A sum-free subset is a subset of $\mathbb{N}$ containing no triple $x,y,z$ such that $x+y=z$. Only the first five Schur numbers are known exactly: $S(1)=1$, $S(2)=4$, $S(3)=13$, $S(4)=44$, and $S(5)=160$, the last equality due to Heule's SAT proof \cite{heule}. The main general lower-bound constructions for Schur numbers are recursive. Schur’s original argument gives \cite{schur}
$$S(k+1) \geq 3S(k)+1,$$ while Abbott and Hanson \cite{abbot-hanson} proved 
$$S(k+2) \geq 9S(k)+4.$$ Recently, Rowley \cite{rowley-s7} introduced template-based constructions which generalize this method and produce
$$S(k+3) \geq 33S(k)+6$$ which improves in particular $S(8)$ and $S(13)$. Ageron \textit{et al.} then used SAT searches for templates and found \cite{ageron}
$$S(k+4) \geq 111S(k)+43, \qquad S(k+5) \geq 380S(k)+148$$ which improve $S(9)$, $S(10)$, $S(11)$, $S(12)$, $S(14)$, and $S(15)$. 
Templates allow one to use a valid partition to extend it and construct a valid Schur partition of a larger size. The extension produces a \textit{main block} of size $p\cdot q$ with $p$ the size of the Schur partition, and $q$ the width of the template. Then a tail can be added to slightly extend the resulting partition. 

\noindent
Fig.~\ref{usual template} summarizes the template construction used by Ageron \textit{et al.}. 

\begin{figure}[htbp]
\centering

\begin{tikzpicture}
    \foreach \i/\col/\txt in {1/red/$A_1$, 2/blue/$A_2$, 3/blue/$A_2$, 
                              4/red/$A_1$, 5/gray/$S$, 6/gray/$S$,
                              7/red/$A_1$, 8/gray/$S$, 9/gray/$S$}{
        \node[draw, minimum width=0.7cm, minimum height=0.65cm, 
              fill=\col!30, font=\scriptsize] at (\i*0.7, 0) {\txt};
    }
    \node[font=\small] at (5*0.7, -1.2) {S-template (width $q=9$)};
\end{tikzpicture}

\vspace{0.5cm}

\begin{tikzpicture}
    \foreach \i/\col/\txt in {1/orange/$1$, 2/purple/$2$, 3/purple/$3$, 4/orange/$4$}{
        \node[draw, minimum width=0.7cm, minimum height=0.65cm, 
              fill=\col!30, font=\scriptsize] at (0, -\i*0.7) {\txt};
    }
    \node[font=\small] at (1.5, -3.5) {Base valid partition $S(2)=4$};
\end{tikzpicture}

\vspace{0.5cm}

\begin{tikzpicture}
    
    \foreach \col/\txt/\couleur in {1/1/red, 2/2/blue, 3/3/blue, 4/4/red, 
                                    5/5/orange, 6/6/orange, 7/7/red, 8/8/orange, 9/9/orange}{
        \node[draw, minimum width=0.7cm, minimum height=0.65cm, 
              fill=\couleur!30, font=\scriptsize] at (\col*0.7, 0) {\txt};
    }
    
    \foreach \col/\txt/\couleur in {1/10/red, 2/11/blue, 3/12/blue, 4/13/red, 
                                    5/14/purple, 6/15/purple, 7/16/red, 8/17/purple, 9/18/purple}{
        \node[draw, minimum width=0.7cm, minimum height=0.65cm, 
              fill=\couleur!30, font=\scriptsize] at (\col*0.7, -0.7) {\txt};
    }
    
    \foreach \col/\txt/\couleur in {1/19/red, 2/20/blue, 3/21/blue, 4/22/red, 
                                    5/23/purple, 6/24/purple, 7/25/red, 8/26/purple, 9/27/purple}{
        \node[draw, minimum width=0.7cm, minimum height=0.65cm, 
              fill=\couleur!30, font=\scriptsize] at (\col*0.7, -1.4) {\txt};
    }
    
    \foreach \col/\txt/\couleur in {1/28/red, 2/29/blue, 3/30/blue, 4/31/red, 
                                    5/32/orange, 6/33/orange, 7/34/red, 8/35/orange, 9/36/orange}{
        \node[draw, minimum width=0.7cm, minimum height=0.65cm, 
              fill=\couleur!30, font=\scriptsize] at (\col*0.7, -2.1) {\txt};
    }
    
    \foreach \col/\txt/\couleur in {1/37/red, 2/38/blue, 3/39/blue, 4/40/red}
    {
        \node[draw, minimum width=0.7cm, minimum height=0.65cm, 
              fill=\couleur!30, font=\scriptsize] at (\col*0.7, -2.8) {\txt};
    }
    
    \draw[decorate, decoration={brace, amplitude=10pt}, line width=1.5pt]
    (7, 0.3) -- (7, -2.4)
    node[midway, right=9pt, font=\small, rotate=270, anchor=south] {Main block ($4 \times 9 = 36$)};
    \node[font=\small] at (5*0.7, -3.9) {Resulting partition};
\end{tikzpicture}
\caption{Construction of a sum-free partition from an S-template. The template of width $q=9$ is repeated on each row of the base $S(2)=4$. The special label $S$ is colored with the color of the corresponding row. The tail corresponds to the last row.}
\label{usual template}
\end{figure}

We introduce an extension of the template formalism. In a usual template construction, when on row $\alpha$ the positions carrying a special \textit{label} (grey in Fig.~\ref{usual coloring}) inherit the color $g(\alpha)$ of the base coloring \cite{ageron}. In a shifted S-template, we allow such positions to inherit instead the color of a neighbor of $\alpha$ $(g(\alpha -1),g(\alpha +1), \ldots)$ from the base partition, as illustrated in Fig.~\ref{shifted coloring}. These shifted references, called shifted special \textit{labels}, create additional flexibility. While the inequality 
$$S(k+2) \geq 9S(k)+4,$$
was the best possible for S-templates,  the template of width $10$ displayed in Table \ref{cert} gives the inequality
$$S(k+2) \geq 10S(k)+2,$$
which improves the lower bounds for $S(8)$ and $S(13)$. We further generalize the construction to \textit{multiple} shifted S-templates so that shifted special labels inherit their value not only from the adjacent rows but from more distant ones: second-nearest, third-nearest, up to the $n$-th nearest row in either direction. Under our computational and time constraints, however, this extension produced no further improvement to the known bounds.

\begin{figure}[H]
\centering
\begin{tikzpicture}
    \foreach \i/\col/\txt in {1/red/$A_1$, 2/blue/$A_2$, 3/blue/$A_2$, 4/red/$A_1$,
                              5/gray/$g(\alpha)$, 6/gray/$g(\alpha)$, 7/gray/$g(\alpha)$, 8/gray/$g(\alpha)$, 9/gray/$g(\alpha)$}{
      \node[draw, minimum width=0.7cm, minimum height=0.65cm, fill=\col!30, font=\scriptsize] (c\i) at (\i*0.7, 0) {\txt};
    }
    \node[left, font=\small] at (0.4, 0) {Row $\alpha$};
    
    \draw[->, thick] (c9.east) -- ++(0.4, 0)
      node[right, draw, fill=orange!30, minimum width=0.8cm, minimum height=0.6cm, font=\small] {$g(\alpha)$};
\end{tikzpicture}
      
\caption{Usual template coloring}
\label{usual coloring}
\end{figure}

\begin{figure}[H]
\centering
\begin{tikzpicture}
    \foreach \i/\col/\txt in {1/red/$A_1$, 2/blue/$A_2$, 3/blue/$A_2$, 4/red/$A_1$}{
      \node[draw, minimum width=0.7cm, minimum height=0.65cm, fill=\col!30, font=\scriptsize] (s\i) at (\i*0.7, 0) {\txt};
    }
    \node[draw, minimum width=1.15cm, minimum height=0.65cm, fill=gray!30, font=\scriptsize] (s5) at (5*0.7 + 0.22, 0) {$g(\alpha{-}1)$};

    \foreach \i/\decal in {6/0.44, 7/0.44, 8/0.44, 9/0.44}{
      \node[draw, minimum width=0.7cm, minimum height=0.65cm, fill=gray!30, font=\scriptsize] (s\i) at (\i*0.7 + \decal, 0) {$g(\alpha)$};
    }

    \node[left, font=\small] at (0.35, 0) {Row $\alpha$};
    \node[draw, fill=purple!30, minimum width=1.15cm, minimum height=0.6cm, font=\scriptsize] (ga1) at (5*0.7 + 0.22, 1.1) {$g(\alpha{-}1)$};
    \node[draw, fill=orange!30, minimum width=0.6cm, minimum height=0.6cm, font=\scriptsize] (ga) at (9*0.7 + 0.44, -1.1) {$g(\alpha)$};
    \draw[->, thick, gray] (s9.south) -- (ga.north);
    \draw[->, thick, purple] (s5.north) -- (ga1.south) node[midway, right, font=\scriptsize] {shift $d=-1$};
\end{tikzpicture}

\caption{Shifted S-template coloring}
\label{shifted coloring}
\end{figure}

\section{General principle of the shifted S-templates}

Let $g: \llbracket 1,p \rrbracket \longrightarrow \llbracket 1,k \rrbracket$ be a valid coloring. The goal is to obtain a valid coloring with $k+r$ colors, by replacing each index $\alpha \in \llbracket 1,p \rrbracket$ by a row of width $q$. This expansion leads to a global array of size $qp$, which we refer to as the \textit{main block}. An integer of the main block is written
$$x=(\alpha -1)q+u, \qquad 1 \leq \alpha \leq p, \qquad 1 \leq u \leq q,$$
where $q$ is the width of the template. $r$ is the number of colors added when extending with this template. We denote by ordinary labels $(A_1,...,A_r)$, labels that always take the color $k+i$ when extending a partition, and by shifted special labels $P_d, d \in \Delta \subseteq \mathbb{Z}$ labels that when on row $\alpha$ take the color $g(\alpha +d)$. Thus, if $h$ is the new coloring obtained by extending a valid $k$-coloring, when on row $\alpha$, $A_i$ receives the color $$h((\alpha -1)q + u) = k + i,$$ and $P_d$ receives the color 
$$h((\alpha -1)q+u)=g(\alpha +d).$$
The correspondence between the base coloring, the row index \(\alpha\),
and the symbols of the template is summarized in
Fig.~\ref{fig:column_mechanism_large}.
\begin{figure}[htbp]
\centering
\begin{tikzpicture}[scale=1.15, every node/.style={transform shape}]

  \node[anchor=west, font=\scriptsize\bfseries\color{black!80!black}] at (-0.2, 3.8) {1. Base Valid $k$-partition $g$:};
  
  \node[draw, thick, fill=orange!30, minimum width=1.4cm, minimum height=0.6cm, font=\tiny] (g1) at (0.9, 2.9) {$g(\alpha-1)$};
  \node[draw, thick, fill=purple!30, minimum width=1.4cm, minimum height=0.6cm, font=\tiny] (g2) at (2.6, 2.9) {$g(\alpha)$};
  \node[draw, thick, fill=green!20, minimum width=1.4cm, minimum height=0.6cm, font=\tiny] (g3) at (4.3, 2.9) {$g(\alpha+1)$};
  
  \node[above=1pt, font=\tiny\color{gray}] at (g1.north) {Row $\alpha-1$};
  \node[above=1pt, font=\tiny\color{gray}] at (g2.north) {Row $\alpha$};
  \node[above=1pt, font=\tiny\color{gray}] at (g3.north) {Row $\alpha+1$};

  \node[anchor=west, font=\scriptsize\bfseries\color{black!80!black}] at (-0.2, 1.8) {2. Row $\alpha$ of the template:};

  \node[draw, fill=red!20, minimum width=0.65cm, minimum height=0.65cm, font=\tiny] (a1) at (0.4, 0.9) {$A_1$};
  \node[draw, fill=blue!20, minimum width=0.65cm, minimum height=0.65cm, font=\tiny] (a2) at (1.1, 0.9) {$A_2$};
  \node[draw, fill=orange!30, minimum width=0.8cm, minimum height=0.65cm, font=\tiny] (pneg) at (1.9, 0.9) {$P_{-1}$};
  \node[draw, fill=purple!30, minimum width=0.8cm, minimum height=0.65cm, font=\tiny] (p0) at (2.8, 0.9) {$P_0$};
  \node[draw, fill=purple!30, minimum width=0.8cm, minimum height=0.65cm, font=\tiny] (p0b) at (3.7, 0.9) {$P_0$};
  \node[draw, fill=green!20, minimum width=0.8cm, minimum height=0.65cm, font=\tiny] (ppos) at (4.6, 0.9) {$P_1$};
  
  \path (ppos) -- ++(0.5,0) node[midway, font=\tiny] {$\dots$};
  \draw[thick, dotted, gray] (0.0, 1.35) rectangle (5.1, 0.45);
  
  \node[below=2pt, font=\tiny\color{gray}] at (2.5, 0.45) {Integer $x = (\alpha - 1)q + u$};

  \draw[->, >=stealth, color=purple!80!black, thick] (g2.south) -- (p0.north)
        node[pos=0.4, right, font=\tiny]{};
        
  \draw[->, >=stealth, color=orange!80!black, thick] (g1.south) .. controls (0.9, 2.0) and (1.9, 2.0) .. (pneg.north)
        node[pos=0.3, left, font=\tiny]{};
        
  \draw[->, >=stealth, color=green!80!black, thick] (g3.south) .. controls (4.3, 2.0) and (4.6, 2.0) .. (ppos.north);

\end{tikzpicture}
\caption{Mapping mechanism of a shifted S-template}
\label{fig:column_mechanism_large}
\end{figure}
\section{Formalism}

We now make the construction precise and prove that a valid template yields a valid coloring. From now on, $\Delta \subseteq \mathbb{Z}$ is the set of shifts allowed in the template. 
\subsection{Row types and licit shifts}
When choosing $\Delta=\{-1,0,1\}$, the template is only composed of three rows, $F,M,L$ (first, middle, last); authorized shifted special \textit{labels} are: 
\begin{center}
\renewcommand{\arraystretch}{1.3} 
\begin{tabular}{lll}
for $F$ : & $\{P_d : d \in \{0,1\}\}$ & (forbidden: $P_{-1}$), \\
for $M$ : & $\{P_d : d \in \Delta\}$  & (all licit), \\
for $L$ : & $\{P_d : d \in \{-1,0\}\}$ & (forbidden: $P_{1}$).
\end{tabular}
\end{center}
When choosing $\Delta = \{-n,...,n\}$, the template is composed of $2n+1$ rows, $F_1,...,F_n,M,L_1,...,L_n$. For $\alpha \in \llbracket 1,p \rrbracket$, set $\ell ( \alpha )=\alpha$ and $\rho ( \alpha)=p+1-\alpha$. Assume $p \geq 2n+1$, so that the zones below are disjoint. The type of $\alpha$ is
$$
\left\{ 
\begin{array}{lll}
     F_i, & \mbox{if} & \ell(\alpha)=i \leq n, \\
     L_j, & \mbox{if} & \rho(\alpha)=j \leq n, \\
     M, & \multicolumn{2}{l}{\mbox{otherwise.}} \\
\end{array}
\right.
$$
Authorized shifted special labels are in this case:
\begin{center}
\renewcommand{\arraystretch}{1.3} 
\setlength{\tabcolsep}{2pt}       
\begin{tabular}{rlll}
for $F_i$ : & $\{P_d : d \in \Delta, \, d \geq 1-i\}$ & (forbidden: & $P_{-i}, \dots, P_{-n}$)\\
for $M$ :   & $\{P_d : d \in \Delta\}$                & (all licit),& \\
for $L_j$ : & $\{P_d : d \in \Delta, \, d \leq j-1\}$ & (forbidden: & $P_{j}, \dots, P_{n}$).
\end{tabular}
\end{center}

With no loss of generality, we will call every type of shifted S-template (multiple or not) a shifted S-template.

\subsection{Addition inside the main block}
Let $x=(\alpha -1)q+u$ and $y=(\beta -1)q+v$ be two integers of the main block. Let: 
$$\epsilon = \begin{cases} 
0, &\text{if } u+v \leq q,\\ 
1, &\text{if } u+v > q,
\end{cases} \qquad w = u+v-\epsilon q \in \llbracket 1,q\rrbracket.$$
Thus, if $z=x+y$ is also inside the main block, it can be written $z=(\gamma -1)q+w$ with 
$$\gamma = \alpha + \beta - 1 + \epsilon,$$
where $\epsilon$ is the column carry.

\subsection{Type transitions}
To verify that a template is admissible, we must ensure that no combination of rows creates a monochromatic sum. The difficulty comes from the fact that the applied base coloring can have an arbitrary size $p$. We prove that the number of cases to verify is finite and independent of $p$.

From $\gamma = \alpha + \beta - 1 + \epsilon$, we obtain: 
\begin{equation}\label{ell-rho}
\ell(\gamma)=\ell(\alpha)+\ell(\beta)-1+\epsilon, \qquad \rho(\gamma)=\rho(\alpha)-\ell(\beta)+1-\epsilon.
\end{equation}

\begin{lemma}[Transition]\label{transition}
Assume $p \geq 3n$ and $\gamma \leq p$ (the sum stays in the block). Then:
\begin{enumerate}
\item[(a)] At most one of the two sources is of a $L$ type. If $\alpha$ is of type $L_j$, then $\beta$ is of type $F_i$ with $i\leq j-\epsilon$, and $\gamma$ is of type $L_{j-i+1-\epsilon}$.
\item[(b)] $\gamma$ is of a $F$ type $F_m$ only when both sources are of a $F$ type; then $m=\ell(\alpha)+\ell(\beta)-1+\epsilon$.
\item[(c)] Consequently, the set of realizable triples $(\tau_\alpha,\tau_\beta,\tau_\gamma)$ is finite and independent of $p$, and the licit shifts on each of the three rows depend only on its type.
\end{enumerate}    
\end{lemma}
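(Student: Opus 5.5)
The plan is to work entirely from the identities \eqref{ell-rho}, the bounds $1\le \ell(\cdot),\rho(\cdot)\le p$ with $\ell+\rho=p+1$, and the type definition. Types are read as: $F_i$ iff $\ell\le n$, $L_j$ iff $\rho\le n$, and $M$ otherwise. Throughout, $\epsilon\in\{0,1\}$, and the hypothesis $\gamma\le p$ means $\rho(\gamma)\ge 1$.

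For (a), I first show that two $L$-type sources are impossible. If $\rho(\alpha)\le n$ and $\rho(\beta)\le n$, then $\ell(\alpha),\ell(\beta)\ge p+1-n$. Hence
\[
\gamma=\ell(\alpha)+\ell(\beta)-1+\epsilon\ge 2p+1-2n > p,
\]
because $p\ge 3n>2n$. This contradicts $\gamma\le p$.

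Now let $\alpha$ be of type $L_j$, so $\rho(\alpha)=j\le n$. By \eqref{ell-rho}, $\rho(\gamma)=j-\ell(\beta)+1-\epsilon\ge 1$, which gives $\ell(\beta)\le j-\epsilon\le n$. So $\beta$ is of type $F_i$ with $i=\ell(\beta)\le j-\epsilon$. Then $\rho(\gamma)=j-i+1-\epsilon$ lies between $1$ and $j\le n$, so $\gamma$ is of type $L_{j-i+1-\epsilon}$. The only point to check is that this is a genuine $L$ type and not also an $F$ type. This holds because $\ell(\gamma)=p+1-\rho(\gamma)\ge p+1-n>n$, using $p\ge 2n+1$, so the zones are disjoint. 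The symmetric case where $\beta$ is of type $L$ is handled the same way, since the roles of $\alpha$ and $\beta$ are interchangeable in $\gamma=\alpha+\beta-1+\epsilon$.

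For (b), suppose $\ell(\gamma)=m\le n$. Since $\ell(\gamma)=\ell(\alpha)+\ell(\beta)-1+\epsilon$ and both $\ell$ values are at least $1$, we get $\ell(\alpha)\le m-\ell(\beta)+1\le m\le n$, and likewise for $\beta$. So both sources are of $F$ type, and the formula for $m$ is just \eqref{ell-rho}.

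For (c), I enumerate the cases. By (a) and (b), every realizable triple falls into one of four families:
\begin{itemize}
\item $(F_i,F_{i'},F_m)$ with $m$ determined by $i,i',\epsilon$, or $(F_i,F_{i'},M)$ when $i+i'-1+\epsilon>n$;
\item $(L_j,F_i,L_{j-i+1-\epsilon})$ and its mirror image;
\item triples with at least one $M$ source and no $L$ source, where $\gamma$ is of type $M$ or $L$ by (b);
\item $(M,F_i,M)$, and similar patterns.
\end{itemize}
Each family is indexed by $i,j,m\le n$ and $\epsilon\in\{0,1\}$, all drawn from the finite type set $\{F_1,\dots,F_n,M,L_1,\dots,L_n\}$. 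So the set of triples is finite and does not depend on $p$. The licit shifts on a row depend only on its type by the definition in the table: $\alpha+d\in\llbracket1,p\rrbracket$ iff $d\ge 1-\ell(\alpha)$ and $d\le \rho(\alpha)-1$, and for $M$ both constraints hold automatically for $|d|\le n$.

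The main obstacle is making sure that every triple listed is actually realizable for some $p$, and conversely that no triple depends on $p$. The role of $p\ge 3n$, stronger than $2n+1$, is presumably to guarantee that $M$ rows exist between the two $F$ and $L$ zones in the relevant sums, e.g.\ $F_i+M\to L$ and $F+F\to M$, so that the realizable list is the same for all large $p$. I would verify this last point by an explicit small choice of $\alpha,\beta$ for each family.
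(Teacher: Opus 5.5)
\paragraph{Verdict on (a), (b).} Your proofs of (a) and (b) are correct and follow the paper's argument. The only difference is that you rule out two $L$ sources via $\ell(\alpha),\ell(\beta)\ge p+1-n$, where the paper uses the symmetric identity $\rho(\gamma)=\rho(\alpha)+\rho(\beta)-p-\epsilon$; this is the same computation. Your derivation of the licit shifts from $1-\ell(\alpha)\le d\le \rho(\alpha)-1$ is a more explicit justification of the last clause of (c) than the paper gives.

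\paragraph{The gap in (c).} The gap is the claim in (c) that the triples do not depend on $p$. Indexing your families by $i,j,m\le n$ and $\epsilon$ only gives finiteness, which is trivial anyway: there are at most $(2n+1)^3$ type triples. Independence of $p$ needs an argument, and your first family silently omits $(F_i,F_{i'},L_j)$. Neither (a), which is about $L$ \emph{sources}, nor (b), which is about $F$ \emph{targets}, excludes this triple.

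This exclusion is exactly where $p\ge 3n$ is used, and it is a one-line check. With two $F$ sources, $\gamma=i+i'-1+\epsilon\le 2n$, whereas an $L$ row satisfies $\gamma\ge p+1-n\ge 2n+1$. The hypothesis is sharp: for $2n+1\le p\le 3n-1$, $F_n+F_n$ with a carry gives $\gamma=2n$ of type $L_{p+1-2n}$, a triple that changes with $p$. Your guess that $3n$ serves to make $M$ rows exist between the zones does not identify this. The paper's own proof of (c) is also only an assertion at this point ("for $p\ge 3n$ they no longer depend on $p$"); this check is its actual content.

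\paragraph{Your planned verification fails at the boundary.} You propose to exhibit each family by explicit choices and show the list is the same for all $p\ge 3n$. At $p=3n$ the $M$ rows are $n+1,\dots,2n$, so every $M+M$ sum with $\gamma\le p$ lands in an $L$ row. Hence $(M,M,M)$ is not realizable at $p=3n$, although it is for large $p$. With $\epsilon$ recorded, the lists coincide only from $p=3n+2$ on.

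So (c) should be read as containment, which is what you can prove and what the admissibility check actually uses. Once $(F,F,L)$ is excluded, every realizable triple, for every $p\ge 3n$, lies in one fixed finite set of type triples (together with $\epsilon$). That set is cut out by (a), (b) and this exclusion, with the indices given by the formulas there, and its definition does not involve $p$.
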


\begin{proof}

(a) If $\alpha$ and $\beta$ were both top, then $\rho(\alpha),\rho(\beta)\leq n$, and the symmetric form of Eq.~\eqref{ell-rho}, namely $\rho(\gamma)=\rho(\alpha)+\rho(\beta)-p-\epsilon$, would give $\rho(\gamma)\leq 2n-p-\epsilon<0$, hence $\gamma>p$: excluded. So assume $\alpha$ is of type $L_j$, i.e. $\alpha=p-j+1$ with $j\leq n$. The condition $\gamma\leq p$ reads $p-j+\beta+\epsilon\leq p$, i.e. $\beta\leq j-\epsilon\leq n$; thus $\beta$ is of a bottom type $F_\beta$, and we set $i=\beta$. By Eq.~\eqref{ell-rho}, $\rho(\gamma)=j-i+1-\epsilon\in \llbracket 1,j \rrbracket \subseteq \llbracket 1,n \rrbracket $ since $1\leq i\leq j-\epsilon$; hence $\gamma$ is of type $L_{j-i+1-\epsilon}$.

(b) $\ell(\gamma)\leq n$ requires, by Eq.~\eqref{ell-rho}, that $\ell(\alpha)+\ell(\beta)\leq n+1$, hence $\ell(\alpha),\ell(\beta)\leq n$, \textit{i.e.} two bottom sources; $m=\ell(\gamma)$ is then given by Eq.~\eqref{ell-rho}.

(c) Types depend only on the distances $\ell,\rho$ bounded by $n$, and the constraints above involve only these bounded values and $\epsilon$. For $p\geq 3n$ they no longer depend on $p$. Since there are $2n+1$ types, the set of realizable triples is finite and fixed.

\end{proof}
\vspace{0.5cm}
\begin{remark}
    Admissibility is therefore checked by running over the column pairs $(u,v)\in\llbracket 1,q\rrbracket^2$ (which fixes $\epsilon$ and $w$) and the realizable type triples of Lemma~\ref{transition}: a finite enumeration. The hypothesis $p\geq 3n$ has no practical consequence, since the base colorings used satisfy $S(k)\gg n$.
\end{remark}

\subsection{Admissible templates}

\begin{definition}[Admissible template]\label{adm}
A template of width $q$, with $r$ new colors and shifts $\Delta=\{-n,...,n\}$, using the types $F_1,...,F_n,M,L_1,...,L_n$, is admissible if, for every realizable transition (Lemma~\ref{transition}) and every pair of columns $(u,v)$:
\begin{enumerate}
\item if the three cells carry the same ordinary label $A_i$, the configuration is forbidden;
\item if the three cells carry shifted special labels $P_a$, $P_b$, $P_c$, the configuration is allowed only when $c=a+b+1-\epsilon$;
\item no cell uses a shift illicit for its type.
\end{enumerate}
\end{definition}

Mixed configurations cannot be monochromatic: an ordinary label $A_i$ takes a new color in $\{k+1,...,k+r\}$, whereas a shifted special label takes a base color in $\{1,...,k\}$.

\subsection{The multiplication and tail theorems}

\begin{theorem}[Multiplication]\label{mult}
If there is an admissible template of width $q$ with $r$ new colors, then
$$S(k+r)\geq q\,S(k).$$
\end{theorem}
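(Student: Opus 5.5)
Take $p=S(k)$ and fix a valid $k$-coloring $g$ of $\llbracket 1,p\rrbracket$. We build $h$ on $\llbracket 1,qp\rrbracket$ as in the general principle: for $x=(\alpha-1)q+u$, set $h(x)=k+i$ if column $u$ of the template row of type $\tau_\alpha$ carries $A_i$, and $h(x)=g(\alpha+d)$ if it carries $P_d$. We then show that $h$ is a sum-free $(k+r)$-coloring, which gives $S(k+r)\geq qp=q\,S(k)$.

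\textbf{Step 1: well-definedness.} A label $P_d$ on row $\alpha$ needs $1\leq\alpha+d\leq p$. Condition 3 of Definition~\ref{adm} says that a row of type $F_i$ (so $\alpha=i$) only uses $d\geq 1-i$, which gives $\alpha+d\geq 1$. Likewise a row of type $L_j$ (so $\alpha=p+1-j$) only uses $d\leq j-1$, which gives $\alpha+d\leq p$. Rows of type $M$ satisfy $n<\alpha\leq p-n$, so any $|d|\leq n$ is licit. The type of a row is fixed by the distances $\ell,\rho$, and we use $p\geq 3n$ as in the Remark, which holds since $S(k)\gg n$. Hence $h$ takes values in $\llbracket 1,k+r\rrbracket$.

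\textbf{Step 2: no monochromatic sum.} Suppose $x+y=z$ with $x,y,z\leq qp$ and $h(x)=h(y)=h(z)$. Write $x=(\alpha-1)q+u$, $y=(\beta-1)q+v$, $z=(\gamma-1)q+w$ with $\gamma=\alpha+\beta-1+\epsilon\leq p$. By Lemma~\ref{transition}, the triple of types $(\tau_\alpha,\tau_\beta,\tau_\gamma)$ is one of the realizable transitions. Together with $(u,v)$, this is exactly one of the configurations checked in Definition~\ref{adm}. We split on the labels.
\begin{itemize}
\item \emph{Mixed labels.} If the three cells mix an ordinary label and a special one, they cannot share a color: one color lies in $\{k+1,\dots,k+r\}$ and the other in $\{1,\dots,k\}$.
\item \emph{Ordinary labels.} If all three cells are ordinary, equal colors force the same label $A_i$, which condition 1 forbids.
\item \emph{Special labels.} If the cells carry $P_a,P_b,P_c$, condition 2 gives $c=a+b+1-\epsilon$. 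Set $\alpha'=\alpha+a$, $\beta'=\beta+b$ and $\gamma'=\gamma+c$. Then
$$\gamma'=\alpha+\beta-1+\epsilon+a+b+1-\epsilon=\alpha'+\beta'.$$
All three indices lie in $\llbracket 1,p\rrbracket$ by Step 1. Equal colors would give $g(\alpha')=g(\beta')=g(\alpha'+\beta')$, a monochromatic Schur triple for $g$, which contradicts the validity of $g$.
\end{itemize}

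\textbf{Main obstacle.} The delicate point is the link between the general $p$ and the finite check. We must make sure that every actual triple of rows maps to a transition that the admissibility check really enumerates. That needs Lemma~\ref{transition}(c), plus the observation that the licit shifts depend only on the type, both of which were established earlier. We also have to handle the identity $c=a+b+1-\epsilon$ carefully: it is exactly the relation that turns a template triple into a base-coloring Schur triple. The case $x=y$ (so $\alpha=\beta$, $u=v$) needs no separate treatment, because sum-freeness of $g$ already forbids $g(\alpha')=g(2\alpha')$.
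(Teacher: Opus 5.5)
Your proof is correct and follows the paper's argument essentially verbatim. You color row by row from the template, use the disjointness of base and new colors to reduce to two cases, and exclude the all-ordinary case by condition 1; in the all-special case, condition 2 yields the base Schur triple $(\alpha+a)+(\beta+b)=\gamma+c$, whose indices are in range by condition 3. Your explicit well-definedness step and your remark on $x=y$ are small additions that the paper leaves implicit; in that step, the bounds you did not state ($\alpha+d\le p$ for $F_i$ rows and $\alpha+d\ge 1$ for $L_j$ rows) follow at once from $p\ge 2n$.
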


\begin{proof}

Let $p=S(k)$ and let $g:\llbracket 1,p\rrbracket\to\llbracket 1,k\rrbracket$ be admissible (so $p\geq 3n$). Define a coloring $h$ of $\llbracket 1,qp \rrbracket$ by applying the template to each row $\alpha$: a cell carrying $A_i$ receives $k+i$, a cell carrying $P_d$ receives $g(\alpha+d)$.

Suppose, for contradiction, $x+y=z$ with $h(x)=h(y)=h(z)$. Since base and new colors are disjoint, only two cases arise.\\
First, the three cells carry the same ordinary label $A_i$: the transition is realizable, hence forbidden by condition~1 of Definition~\ref{adm}.\\
Second, the three cells carry shifted special labels $P_a,P_b,P_c$: by condition~2, $c=a+b+1-\epsilon$. The shifts being licit (condition~3), the indices $\alpha+a,\beta+b,\gamma+c$ lie in $\llbracket 1,p\rrbracket$; with $c=a+b+1-\epsilon,$ and using $\gamma=\alpha+\beta-1+\epsilon$, we have $\gamma+c=(\alpha+\beta-1+\epsilon)+(a+b+1-\epsilon)=(\alpha+a)+(\beta+b)$, and since the three base colors coincide, $g$ would admit a monochromatic solution: contradiction.

Hence $h$ is admissible on $\llbracket 1,qp\rrbracket$, so 
$$S(k+r)\geq qp=q\,S(k).$$
\end{proof}
\vspace{0.5cm}
After the main block, we can add a tail of length $t$, at positions $qp+1,...,qp+t$, carrying fixed symbols. We then additionally check every sum that touches the tail; since the tail has fixed length, this stays a finite verification.
\vspace{0.3cm}
\begin{theorem}[Template with tail]\label{tail}
If there exists an admissible template of width $q$ with $r$ new colors and an admissible tail of length $t$, then
$$S(k+r)\geq q\,S(k)+t.$$
\end{theorem}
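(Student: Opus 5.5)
The proof reuses the argument of Theorem~\ref{mult} on the main block $\llbracket 1,qp\rrbracket$ and then handles sums touching the tail separately. Let $p=S(k)$ and let $g$ be a valid $k$-coloring of $\llbracket 1,p\rrbracket$ (with $p\geq 3n$). Define $h$ on $\llbracket 1,qp\rrbracket$ exactly as in the proof of Theorem~\ref{mult}. On the tail positions $qp+s$, $1\leq s\leq t$, let $h$ take the value dictated by the fixed symbol: $k+i$ for $A_i$, and $g(\alpha_0)$ for a special symbol referring to a fixed base row $\alpha_0$, such as $g(p)$, $g(p-1)$ or $g(1)$. Since $p\geq 3n$ and the referenced rows are within bounded distance of the boundary, these indices lie in $\llbracket 1,p\rrbracket$.

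Suppose $x+y=z$ is monochromatic for $h$. If $x,y,z\leq qp$, Theorem~\ref{mult} already gives a contradiction. Otherwise $z>qp$, since $z$ is the largest of the three, so $z=qp+s$ with $s\leq t$. There are two subcases.
\begin{enumerate}
\item Both $x,y\leq qp$. Then $x+y>qp$ forces the larger summand, say $x=(\alpha-1)q+u$, to lie in a row with $\rho(\alpha)$ small. Since $y=z-x\leq t+q\rho(\alpha)$ and $y\leq x$, one of $x,y$ is near the top and the other is bounded. Write $x$ via $\rho(\alpha)$ and $y$ via $\ell(\beta)$. The constraint $x+y=qp+s$ then becomes $q(\ell(\beta)-\rho(\alpha))+u+v=s+q$, which no longer involves $p$. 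So only finitely many configurations $(\rho(\alpha),u,\ell(\beta),v,s)$ occur, and their number is bounded in terms of $q$, $t$ and $n$. Here I assume, as the admissibility of the tail should stipulate, that the tail is short relative to $q$, so that $\rho(\alpha)$ and $\ell(\beta)$ stay bounded by about $n+1$.
\item One of $x,y$ lies in the tail. Then the other is at most $t$, hence lies in the first $\lceil t/q\rceil$ rows, so $\ell$ is bounded. Again only finitely many configurations occur.
\end{enumerate}

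In each configuration the colors of the three cells are either new colors $k+i$ or base colors $g(\cdot)$ of indices expressed as $\ell$- or $\rho$-offsets. Admissibility of the tail is exactly the statement that, in every such configuration, one of three things holds: the labels are distinct ordinary labels, or they are mixed ordinary/special, or they are special and their referenced indices satisfy $i_z=i_x+i_y$. In the last case, monochromaticity of $h$ yields a monochromatic solution of $g$, which is a contradiction. I will therefore phrase ``admissible tail'' as this finite list of checks, mirroring Definition~\ref{adm}. I will also verify that the index identity is independent of $p$: referenced tail indices have the form $p+1-j$, and sums of the form $\ell+\rho$ produce $p$-offsets consistently.

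The main obstacle is making the finiteness in step 1 rigorous and independent of $p$. This mirrors Lemma~\ref{transition}(a): a top-row $L_j$ summand pairs with a bottom-row $F_i$ summand, with the carry determined by $u+v$ and $s$. Once this is established, $h$ is a valid $(k+r)$-coloring of $\llbracket 1,qp+t\rrbracket$, giving $S(k+r)\geq qS(k)+t$.
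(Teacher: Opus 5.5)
Your overall structure matches the paper's: reuse Theorem~\ref{mult} on $\llbracket 1,qp\rrbracket$, then treat $z=qp+s$ according to whether a summand lies in the tail. Your case~2 is fine. The gap is in case~1, at exactly the step you flag as the main obstacle. It is false that the larger summand lies in a row with $\rho(\alpha)$ small, and false that only finitely many configurations $(\rho(\alpha),u,\ell(\beta),v,s)$ occur.

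Your own identity $q(\ell(\beta)-\rho(\alpha))+u+v=s+q$, together with $2\le u+v\le 2q$ and $t\le q+1$, bounds only the difference $\delta=\ell(\beta)-\rho(\alpha)=\alpha+\beta-p-1\in\{0,1\}$, not the values themselves. Here $\delta=0$ iff $u+v=q+s$, and $\delta=1$ iff $u+v=s$. For instance, take $p$ odd, $\alpha=\beta=(p+1)/2$ and $u+v=q+s$. Then $x+y=qp+s$ with both summands in the middle of the block, and $\rho(\alpha)=\ell(\beta)=(p+1)/2$ grows with $p$.

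These sums are not vacuous. In Table~\ref{cert}, columns $(5,6)$ of two $M$ rows with $\alpha+\beta=p+1$ land on the tail cell $10p+1$ for every such $\alpha$. They must be checked, and they pass only because the labels $B$ and $A$ differ. For the same reason, your analogy with Lemma~\ref{transition}(a) is misplaced: two $M$-type sources can sum into the tail, just as two $M$ rows can sum into an $L$ row inside the block.

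What rescues the argument is the type abstraction of Lemma~\ref{transition}(c), not bounded positions. Since $\ell(\beta)=\rho(\alpha)+\delta$ and $\rho(\beta)=\ell(\alpha)-\delta$, the type pair $(\tau_\alpha,\tau_\beta)$ runs over a finite list independent of $p$. For $\delta=0$ it is $(F_i,L_i)$, $(M,M)$, $(L_j,F_j)$. For $\delta=1$ it is $(F_i,L_{i-1})$ with $2\le i\le n$, $(M,L_n)$, $(M,M)$, $(L_n,M)$, and $(L_j,F_{j+1})$ with $j\le n-1$.

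Ordinary labels have row-independent colors, so their check depends only on $(\tau_\alpha,u,\tau_\beta,v,s)$. For special labels $P_a,P_b$ the indices $\alpha+a$ and $\beta+b$ are unbounded. However, $(\alpha+a)+(\beta+b)=p+1+\delta+a+b$, so for a tail cell colored $g(p+1-j)$ the required Schur relation becomes $a+b+\delta=-j$, a condition on shifts alone. This is what your remark on ``$p$-offsets'' is reaching for.

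So you should formulate the admissible-tail check over these type pairs, rather than deriving finiteness from a bound on positions that does not hold. Alternatively, you could prove that each type pair has a representative with $\rho(\alpha)\le n+1$ and that the check at that representative transfers to every row of the same type. With either change your proof closes. It then coincides with what the paper's much terser proof implicitly uses when it asserts that the sums landing in the tail are ``exactly those added to the admissibility check of the tail.''
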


\begin{proof}

The construction on the main block is identical to Theorem~\ref{mult}; the $t$ terminal positions are then colored according to the tail. Sums entirely in the block are handled as before. Sums whose result falls in the tail are exactly those added to the admissibility check of the tail. Finally, a tail position can be the left term of a sum remaining in $\llbracket 1,qp+t\rrbracket$ only in cases already included in this finite verification.

\end{proof}
\section{Results}

With a SAT solver, we obtained the following results: 

\begin{table}[H]
\caption{SAT results with shifted S-templates search}
\label{results}
\centering
\renewcommand{\arraystretch}{1.2}
\begin{tabular}{c|c c c}
\toprule
$\Delta$ & $r=2, q=10$ & $r=2, q=11$ & $r = 3, q=34$ \\ 
\midrule
$\{-1,0,1\}$ & SAT & UNSAT & UNSAT \\ 
$\{-2,...,2\}$ & X & UNSAT & UNSAT \\ 
$\{-3,...,3\}$ & X & UNSAT & timeout (20h) \\ 
\bottomrule
\end{tabular}
\end{table}

For $r=2,q=11$, it remains UNSAT up to $\Delta=\{-9,...,9\}$; we did not try above that.
The multiple shifted S-templates for $r=2,q=10$ give templates that are just copies of the template with $\Delta = \{-1,0,1\}$. This template is shown in Table \ref{cert}; a tail $Q=(A,B)$ is added.

\begin{table}[h]
\caption{A shifted S-template of width $10$ with two ordinary colors $A,B$, shift set $\Delta=\{-1,0,1\}$, and tail $Q=(A,B)$.}
\label{cert}
\centering
\renewcommand{\arraystretch}{1.2}
\begin{tabular}{c|cccccccccc}
\toprule
 & 1 & 2 & 3 & 4 & 5 & 6 & 7 & 8 & 9 & 10\\
\midrule
$F$ & $B$ & $A$ & $P_0$ & $P_0$ & $B$ & $A$ & $A$ & $B$ & $P_0$ & $P_0$\\
$M$ & $P_{-1}$ & $B$ & $P_0$ & $P_0$ & $B$ & $A$ & $A$ & $B$ & $P_0$ & $P_0$\\
$L$ & $P_{-1}$ & $B$ & $P_0$ & $P_0$ & $B$ & $A$ & $A$ & $B$ & $P_0$ & $P_0$\\
\bottomrule
$Q$ & $A$ & $B$ &  &  &  &  &  &  &  & \\
\end{tabular}
\end{table}

By Theorem~\ref{tail} with $q=10$, $r=2$, $t=2$,
$$S(k+2)\geq 10\,S(k)+2.$$
This improves on the Abbott-Hanson recurrence $S(k+2)\geq 9S(k)+4$ for the same step. Combined with $S(6)\geq 536$ \cite{fredricksen-sweet} and $S(11)\geq 203~828$ \cite{ageron},
$$S(8)\geq 10\cdot 536+2=5~362$$
$$S(13)\geq 10\cdot 203~828+2=2~038~282$$
improving the previously listed lower bounds $5~286$ and $2~011~290$.

\section{Conclusion}

From an original idea of GPT 5.5 Pro, we have produced an extension of the S-templates originally designed by Rowley \cite{rowley-s7}. These shifted S-templates provide new inequalities of the form $S(k+r) \geq aS(k)+b$ and broaden the search space for lower bounds of Schur numbers. We have provided a valid shifted S-template which yields the new recurrence $S(k+2) \geq 10S(k)+2$ and thus new lower bounds for $S(8)$ and $S(13)$. For $r\geq 4$ the search stays open but is expected to be prohibitive: the relevant widths ($q\geq 112,q\geq 381$) blow up the encoding (size $\sim q^{2}\lvert\Delta\rvert^{3}$) far past the limit already reached at $r=3$.

\section{Acknowledgements}

We would like to thank Romain Ageron. During the preparation of this manuscript, it came to our attention that he had independently developed a highly similar shifted template approach. Following a fruitful exchange, we realized the close conceptual proximity of our respective unpublished works. We are deeply grateful for his scientific transparency and the constructive discussions that ensued.

\end{document}